\newcommand\Q{\mathbb{Q}}	\newcommand\R{\mathbb{R}}	\newcommand\C{\mathbb{C}}	\newcommand\bbP{\mathbb{P}}
	\newcommand\M{\mathcal{M}}
\newcommand\logder{\ell}
\newcommand\ord{\operatorname{ord}}	\newcommand\disc{\operatorname{disc}}
\newcommand\arch{{\textup{arch}}}	\newcommand\na{{\textup{na}}}
\newtheorem{theorem}{Theorem}[section]
\newtheorem{lemma}[theorem]{Lemma}
\theoremstyle{definition}\newtheorem{definition}[theorem]{Definition}
\theoremstyle{remark}\newtheorem{remark}[theorem]{Remark}
\newtheorem{conjecture}[theorem]{Conjecture}
\numberwithin{equation}{section}
\begin{document}

\title{The ABC Theorem for Meromorphic Functions}

\author{Machiel van Frankenhuijsen}

\address{Utah Valley University,
Department of Mathematics,
800 West University Parkway,
Orem, Utah 84058-5999}
\email{vanframa@uvu.edu}

\subjclass{Primary 30D35;
Secondary 11Axx}

\date{May 12, 2008}

\keywords{ABC theorem, Nevanlinna theory, error term in the abc conjecture, archimedean contribution to the radical.}

\begin{abstract}
Using a `height-to-radical' identity,
 we define the archimedean contribution to the radical,
$r_\arch$,
and we give a new proof of the abc theorem for the field of meromorphic functions.
The first step of the proof is completely formal and yields that the height is bounded by the radical,
$h\leq r$,
where $r=r_\na+r_\arch$ is the radical completed with the archimedean contribution.
The second step is analytic in nature and uses the lemma on the logarithmic derivative to derive a bound for $r_\arch$.
\end{abstract}

\maketitle

\section{Introduction}
\label{S: intro}

Although the abc theorem for meromorphic functions was first formulated and proved in \cite{thesis},
it has essentially been known since the beginning of the twentieth century.
For example,
it is an easy consequence of Nevanlinna's second main theorem.
Since Mason \cite{Mason},
the point of view has changed,
especially when it became known that the abc conjecture for numbers implies an effective version of Mordell's conjecture
(see \cite{Elkies,abcRM,abcvhi,abcrvhi};
the proofs of Faltings and Vojta \cite{F1,F2,VojtaMordell} are ineffective).

Here,
we prove this theorem again,
organizing the proof in two steps to reveal the main structure.
In the first step,
we use a `height-to-radical' identity to obtain in a completely formal way $h(\rho)\leq r(\rho)$ for every radius $\rho\geq1$,
where $h$ and $r$ are respectively the (logarithmic) height and radical of an abc sum $a(z)+b(z)+c(z)=0$
of meromorphic functions.
We will see how to define a natural contribution of the archimedean valuations to the radical,
$r(\rho)=r_\na(\rho)+r_\arch(\rho)$.
Then in the second step,
we apply the lemma on the logarithmic derivative to obtain the inequality $r_\arch(\rho)\leq2\log h(\rho)+O(\log\log h(\rho))$
for every $\rho\geq1$ outside an open subset $E$ of $[1,\infty)$ of finite total length.\medskip

To explain the motivation from number theory,
let $k$ be a number field.
The valuations of $k$ satisfy the Artin--Whaples sum formula:
for every $x\in k^*$,
\begin{gather}\label{Artin-Whaples}
\sum_vv(x)=0.
\end{gather}
Here,
 we fix the archimedean valuations so that $v(2)=[k_v:\R]\log2$,
and the nonarchimedean valuations so that $v(p)=-[k_v:\Q_p]\log p$,
where $p$ is the rational prime such that $v(p)<0$.
Note that we define our valuations with a minus sign as compared to \cite{Serre}.

For a point $P=(a:b:c)\in\bbP^2(k)$,
we define the contribution of the valuation $v$ to the height by
\begin{gather*}
h_v(a,b,c)=\begin{cases}
\max\{v(a),v(b),v(c)\}				&\text{if $v$ is nonarchimedean,}\\
\frac{[k_v:\R]}2\log\bigl(|a|^{2}+|b|^{2}+|c|^{2}\bigr)&\text{if $v$ is archimedean},
\end{cases}
\end{gather*}
where we write $|a|=\sqrt{a\bar a}$ for the usual absolute value of a complex number.
The {\em height\/} of $P$ is defined by
\begin{gather*}
h(P)=\sum_vh_v(a,b,c).
\end{gather*}
By (\ref{Artin-Whaples}),
the height does not depend on the choice of coordinates for $P$,
even though the local contributions do depend on this choice.

\begin{remark}
The height depends on the number field,
but the `canonical height' $h(P)/[k:\Q]$ does not depend on $k$.
On the other hand,
for the radical,
to be defined below,
there is no good definition of a `canonical radical'.
\end{remark}

The {\em degree\/} of a valuation is defined by
\begin{gather*}
\deg(v)=\begin{cases}
\log\#k(v)	&\text{if $v$ is nonarchimedean,}\\
0		&\text{if $v$ is archimedean,}
\end{cases}
\end{gather*}
where $k(v)$ denotes the residue class field of $v$.
The contribution of the valuation $v$ to the radical is defined by
\begin{gather*}
r_{v}(P)=\begin{cases}
0	&\text{if }v(a)=v(b)=v(c),\\
\deg(v)	&\text{otherwise,}
\end{cases}
\end{gather*}
which is clearly independent of the choice of coordinates for $P$.
We define the {\em incomplete radical\/} of $P$ by
\begin{gather*}
r_\na(P)=\sum_vr_{v}(P).
\end{gather*}
Thus the radical has a contribution from every nonarchimedean valuation where the orders of $a$,
$b$ and $c$ are not all equal to each other.
In particular,
$r_\na(P)=\infty$ if one of the coordinates of $P$ vanishes.
It has no contribution from the archimedean valuations,
since we have put $\deg(v)=0$ for these valuations.

\begin{conjecture}[ABC conjecture with type $\psi$ \cite{Oesterle,abcrvhi,VojtaABC}]
\label{C: abc}
There exists a function $\psi$ with $\psi (h)=o(h)$ such that for every point $P=(a:b:c)\in\bbP^2( k)$ on the line $a+b+c=0$,
\begin{gather*}
h(P)\leq r_\na(P)+\psi(h(P))+\log|\disc(k)|.
\end{gather*}
One may further conjecture that $\psi/[k:\Q]$ is independent of the number field $k$.
\end{conjecture}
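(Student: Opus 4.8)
The plan is to mimic, over a number field $k$, the two-step structure that works for meromorphic functions: a formal "height-to-radical" inequality, followed by an analytic bound on the archimedean contribution. For the first step I would define an archimedean contribution to the radical $r_\arch(P)=\sum_{v\mid\infty}r_{\arch,v}(P)$ by forcing the same local identity that produces $r_\arch$ in the function-field case. Concretely, for a point $P=(a:b:c)$ on the line $a+b+c=0$ one has the trivial factorization $c=-(a+b)$, and at each place $v$ the defect $h_v(a,b,c)-r_v(P)$ should be rewritten, after summing over $v$ and invoking the Artin--Whaples formula \eqref{Artin-Whaples}, as a sum of purely archimedean local terms plus a global correction. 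The global correction is $\log|\disc(k)|$: it appears because the bookkeeping that matches $\ord$'s to degrees $\deg(v)=\log\#k(v)$ is only exact after accounting for the ramification of $k/\Q$, i.e. for the different — exactly as the canonical-bundle/Hurwitz term enters Vojta's reformulation, and consistently with its absence over $\Q$ and over the field of meromorphic functions. Each archimedean local term will measure how far the complex numbers $a,b,c$ are from having "equal orders," e.g. through $\log^{+}$ of the ratios $|a/c|$ and $|b/c|$. I expect this formal step to go through with the same combinatorics as in the paper, yielding $h(P)\le r_\na(P)+r_\arch(P)+\log|\disc(k)|$ with no error term.

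For the second step I would need to bound $r_\arch(P)$ by $\psi(h(P))$ with $\psi(h)=o(h)$. In the meromorphic setting this is precisely the content of the lemma on the logarithmic derivative, which gives $r_\arch(\rho)\le2\log h(\rho)+O(\log\log h(\rho))$ off an exceptional set of finite length. The arithmetic step therefore calls for an analogue of the lemma on the logarithmic derivative over $\Q$: a statement to the effect that, for an algebraic number written as $a/c$ with $a+b+c=0$, the total archimedean mass of "its logarithmic derivative" is dominated by the logarithm of its height. Two routes suggest themselves. One is to introduce an arithmetic surrogate for differentiation — a $p$-derivation, or the Fermat-quotient map $x\mapsto x^{p}-x$, or Frey's elliptic curve $y^{2}=x(x-a)(x+b)$ paired with a Szpiro-type conductor bound — and to estimate the archimedean period integrals attached to it. The other is a direct Diophantine-approximation argument of Bombieri--Vojta type, building an auxiliary polynomial and using Roth-style machinery to control the archimedean contribution while the nonarchimedean contribution is absorbed by $r_\na$.

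The hard part — indeed the whole difficulty — is this second step. No unconditional arithmetic analogue of the lemma on the logarithmic derivative is known that is strong enough to make $r_\arch$ be $o(h)$; producing one is equivalent to the abc conjecture itself, so I do not expect the Nevanlinna-theoretic machinery to transfer without an essentially new idea, and the present framework does not supply one. What the first step does buy, however, is a sharp target: it isolates $r_\arch$ as the single quantity that must be shown to be $o(h)+O(\log|\disc(k)|)$, and it explains structurally why the nonarchimedean radical alone cannot bound the height. A realistic interim goal, and the one I would pursue here, is to prove the formal inequality $h(P)\le r_\na(P)+r_\arch(P)+\log|\disc(k)|$ in complete detail, to check that it specializes to the meromorphic abc theorem under Vojta's dictionary, and then to record the precise archimedean estimate on $r_\arch$ that would close the argument — leaving that estimate as the sole conjectural input, in the spirit of Vojta's equivalent reformulation of the conjecture.
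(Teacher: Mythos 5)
The statement you were asked about is a \emph{conjecture}, not a theorem: the paper offers no proof of it, and none exists. What the paper does is sketch, in its concluding section, exactly the three-step program you describe (define a completed radical $r=r_\na+r_\arch$ for number fields, show $h\leq r$ formally, then bound $r_\arch$ by $\psi(h)$), so your outline faithfully reproduces the author's own proposed strategy and you are right that the second step is the whole difficulty and is essentially equivalent to the conjecture itself. To that extent your assessment is sound and honestly flagged.

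However, you overstate what the first step buys. You write that you ``expect this formal step to go through with the same combinatorics as in the paper,'' but in the meromorphic case the formal inequality is not obtained by abstract bookkeeping at each place: it rests on the height-to-radical identity $(a:b:c)=\bigl((b/c)^\logder:(c/a)^\logder:(a/b)^\logder\bigr)$, which uses differentiation and the fact that the logarithmic derivative converts zeros and poles of any multiplicity into simple poles. That is what makes $h_x=r_x$ at the nonarchimedean places and simultaneously \emph{defines} $r_\arch$ as the archimedean height of the new coordinates. Over a number field there is no analogue of the derivative, so there is at present no canonical definition of $r_\arch(P)$ at all, and no mechanism forcing the local defect $h_v-r_v$ to be expressible as an archimedean term plus $\log|\disc(k)|$; the paper itself only says ``presumably'' step 2 will follow from the definition in step 1, precisely because step 1 is open. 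Your proposed surrogates (Fermat quotients, $p$-derivations, Frey curves, Roth-type auxiliary constructions) are reasonable things to name, but none of them is known to yield even the formal inequality $h\leq r_\na+r_\arch+\log|\disc(k)|$ with an unconditionally defined $r_\arch$, let alone the bound $r_\arch=o(h)$. So the genuine gap is not only in your second step, as you say, but already in the first: the ``completely formal'' part of the meromorphic proof is formal only because the derivative exists, and that ingredient has no arithmetic counterpart.
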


\begin{remark}
This conjecture can be interpreted as a weak form of Hurwitz' formula.
See Remark \ref{Hurwitz} and \cite{Smirnov}.
\end{remark}

The abc conjecture is known with $\psi(h)=h-C\log h$ for a constant $C$ (this function is not $o(h)$) \cite{ST,SYu},
and if the abc conjecture holds for some function $\psi$,
then we must have $\psi(h)\geq 6.07{\sqrt{h}}/{\log h}$ \cite{ST,lowerbound}.
Numerical evidence suggests that the abc conjecture may hold with $\psi(h)=4[k:\Q]\sqrt h$.
Thus we obtain the stronger conjecture
\begin{quote}
For every point $P=(a:b:c)\in\bbP^2( k)$ on the line $a+b+c=0$,
$$
h(P)\leq r_\na(P)+4[k:\Q]\sqrt{h(P)}+\log|\disc(k)|.
$$
\end{quote}
In Section \ref{S: conclusion},
we give a possible interpretation of the error term $\psi(h)$.\medskip

It is rather surprising that this conjecture implies an effective version of both Vojta's height inequality \cite{abcvhi} and the radicalized Vojta heigh inequality \cite{abcrvhi} (see \cite[Conjectures 2.1 and 2.3]{VojtaABC}).
In particular,
it implies Roth's theorem with an effective error term and an effective version of Mordell's conjecture.
These implications use the construction of a Bely\u\i\ function,
for which there is no analogue for function fields or the field of meromorphic functions.

\section{The Height-to-Radical Identity}

Let $\M$ be the field of meromorphic functions on $\C$,
and let $(f)^\logder$ denote the logarithmic derivative,
\begin{gather*}
(f)^\logder(z)=\frac{f'(z)}{f(z)}.
\end{gather*}
It is easily verified that $(f)^\logder(z)$ is a meromorphic function with only simple poles.

We call a point $P(z)=(a(z):b(z):c(z))\in\bbP^2(\M)$ {\em nonconstant\/} if at least one of the functions $a/b$,
$b/c$ or $c/a$ is not constant.

\begin{lemma}[Height-to-Radical Identity]
For a nonconstant point $P=(a:b:c)$ on the line $a+b+c=0$ in $\bbP^2(\M),$
we have the identity
\begin{gather}\label{hr}
(a:b:c)=\left((b/c)^\logder:(c/a)^\logder:(a/b)^\logder\right).
\end{gather}
\end{lemma}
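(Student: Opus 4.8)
The plan is to verify the identity by showing that the two triples are proportional in $\M$. Since $a+b+c=0$ on the line, I would first divide through by one of the coordinates — say $c$ — to work with the single nonconstant ratio. Write $f=a/c$ and $g=b/c$, so that $f+g+1=0$ in $\M$, i.e. $g=-1-f$, and the nonconstancy hypothesis guarantees $f$ (or one of the symmetric ratios) is nonconstant, hence $f'\neq 0$ as a meromorphic function. The claim \eqref{hr} rescaled by $c$ becomes the assertion that $(f:g:1)$ equals $\bigl((b/c)^\logder:(c/a)^\logder:(a/b)^\logder\bigr)$; so the real content is to express each logarithmic derivative in terms of $f'$ and to read off a common factor.

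The key computation is the behavior of $(\cdot)^\logder$ on the three ratios. Using $(uv)^\logder=u^\logder+v^\logder$ and $(u/v)^\logder=u^\logder-v^\logder$, together with the elementary fact that for a meromorphic function $u$ one has $(u)^\logder=u'/u$, I get $(b/c)^\logder=g'/g$, $(c/a)^\logder=-f'/f$, and $(a/b)^\logder=f^\logder-g^\logder=f'/f-g'/g$. Now differentiate the relation $f+g+1=0$ to obtain $f'+g'=0$, so $g'=-f'$. Substituting, the three logarithmic derivatives become
\begin{gather*}
(b/c)^\logder=\frac{-f'}{g},\qquad (c/a)^\logder=\frac{-f'}{f},\qquad (a/b)^\logder=\frac{f'}{f}-\frac{g'}{g}=f'\Bigl(\frac1f-\frac{g'}{f'g}\Bigr)=f'\cdot\frac{g-(-1)f}{fg}\cdot(-1)
\end{gather*}
— more cleanly, $(a/b)^\logder=f'/f+f'/g=f'\,(f+g)/(fg)=f'\cdot(-1)/(fg)=-f'/(fg)$, using $f+g=-1$. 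Thus
\begin{gather*}
\bigl((b/c)^\logder:(c/a)^\logder:(a/b)^\logder\bigr)=\Bigl(\frac{-f'}{g}:\frac{-f'}{f}:\frac{-f'}{fg}\Bigr)=(f:g:1),
\end{gather*}
after multiplying through by the nonzero meromorphic scalar $-fg/f'$ (here $f'\neq 0$ by nonconstancy, and $f,g\neq 0$ in $\M$ since $(a:b:c)$ is a point of $\bbP^2$, so these are units in the field $\M$). Finally, multiply back by $c$ to recover \eqref{hr} for the original triple $(a:b:c)$.

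The only genuine subtlety — and the step I would be most careful about — is the justification that the scalar $-fg/f'$ is a legitimate nonzero element of $\M$, i.e. that $f'$ does not vanish identically; this is exactly where the nonconstant hypothesis is used, and one should check that nonconstancy of any one of $a/b$, $b/c$, $c/a$ forces $f'\not\equiv 0$ (if $f=a/c$ were constant, then $g=-1-f$ is constant, hence $a/b=f/g$ is constant, contradicting, unless the nonconstant ratio is itself $a/c$ up to symmetry — in all cases a constant $f$ makes all three ratios constant). Everything else is a formal manipulation of logarithmic derivatives in the field $\M$, valid because $\M$ is a field of characteristic zero on which $(\cdot)^\logder$ is additive on products and the relation $f+g+1=0$ differentiates termwise.
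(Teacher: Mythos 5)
Your proof is correct and takes essentially the same route as the paper: reduce to the normalized triple $(f:g:1)$ with $f+g+1=0$, compute the three logarithmic derivatives, use $f'+g'=0$, and clear denominators by a common nonzero meromorphic scalar. The only cosmetic difference is that the paper verifies proportionality by multiplying the two sides by $g'$ and $fg$ respectively, whereas you substitute $g'=-f'$ into each term and factor out $-f'/(fg)$ explicitly — the same computation, presented slightly differently.
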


\begin{proof}
The right-hand side does not change if we replace $(a,b,c)$ by $(a/c,b/c,1)$ (note that $P$ is constant if $c=0$).
Hence we need to check that for $f+g+1=0$,
$$
(f:g:1)=\left({g'}/{g}:-{f'}/{f}:{f'}/f-{g'}/{g}\right).
$$
Now $f'+g'=0$ and $g'$ does not vanish identically (since $P$ is nonconstant),
hence this identity follows after multiplying the left-hand side by $g'$ and the right-hand side by $fg$.
\end{proof}

\begin{remark}
The height-to-radical identity provides a canonical way to choose meromorphic coordinates for a point $(a:b:c)$ on the line
 $a+b+c=0$.
Indeed,
replacing $(a,b,c)$ by $(\lambda a,\lambda b,\lambda c)$,
for a meromorphic function $\lambda(z)$,
 does not change the right-hand side of \eqref{hr}.
\end{remark}

\section{The Valuations of the Field of Meromorphic Functions}
\label{S: Nevan}

The valuations of $\M$ satisfy the Poisson--Jensen formula
\begin{gather}\label{PJ}
\sum_{|x|<\rho}v_x(f,\rho)+\int_{|z|=\rho}v_z(f,\rho)\frac{dz}{2\pi iz}-v_\infty(f)=0,
\end{gather}
where the nonarchimedean valuations are parametrized by $x$ with $|x|<\rho$,
$$
v_x(f,\rho)=\begin{cases}
-\ord(f,x)\log\frac\rho{|x|}	&\text{for }0<|x|<\rho,\\
-\ord(f,0)\log\rho		&\text{for }x=0;
\end{cases}
$$
the archimedean valuations are parametrized by $z$ on the circle of radius $\rho$,
$$
v_z(f,\rho)=\log|f(z)|,
$$
and $v_\infty(f)$ is the absolute value of the first coefficient in the Laurent series of $f$ around $0$:
for $f(z)=f_nz^n+f_{n+1}z^{n+1}+\dots$,
$$
v_\infty(f)=\log|f_n|,\quad\text{where $n=\ord(f,0)$ and }f_n=\lim_{z\to0}f(z)z^{-n}.
$$
We count this function among the archimedean valuations,
even though,
strictly speaking,
 it is not a valuation.
Note that $v_0(f,\rho)$ is only a valuation for $\rho\geq1$.

For a point $P=(a:b:c)\in\bbP^2(\M)$,
the local contributions to the height are
\begin{align*}
&h_x(a,b,c)(\rho)=\max\left\{v_x(a,\rho),v_x(b,\rho),v_x(c,\rho)\right\},\quad\text{ for }v_x,\ |x|<\rho,\\
&h_z(a,b,c)(\rho)=\log\sqrt{|a(z)|^2+|b(z)|^2+|c(z)|^2},			\quad\text{ for }v_z,\ |z|=\rho,\\
&h_\infty(a,b,c)=\log\sqrt{|a_m|^2+|b_m|^2+|c_m|^2},			\quad\text{ for }v_\infty,
\end{align*}
where $m=\min\{\ord(a,0),\ord(b,0),\ord(c,0)\}$.
Then the {\em height\/} of $P$ is defined by
\begin{gather}\label{height}
h(P,\rho)=\sum_{|x|<\rho}h_x(a,b,c)(\rho)+\int_{|z|=\rho}h_z(a,b,c)(\rho)\frac{dz}{2\pi iz}-h_\infty(a,b,c).
\end{gather}
By the Poisson--Jensen formula (\ref{PJ}),
the height does not depend on the choice of coordinates for $P$.

The {\em degree\/} of a valuation is defined by
\begin{align*}
&\deg(v_x,\rho)=\begin{cases}
\log\frac\rho{|x|}&\text{ for }0<|x|<\rho,\\
\log\rho	&\text{ for }x=0,
\end{cases}\\
&\deg(v_z,\rho)=0\quad\text{ for }|z|=\rho,\\
&\deg(v_\infty,\rho)=0.
\end{align*}
Note that $\deg(v_0,\rho)\geq0$ only for $\rho\geq1$.
All other nonarchimedean valuations have a positive degree,
and the archimedean valuations have vanishing degree.

The local contributions to the radical are defined by
\begin{gather*}
r_v(P,\rho)=\begin{cases}
0	&\text{if }v(a)=v(b)=v(c),\\
\deg(v,\rho)	&\text{otherwise,}
\end{cases}
\end{gather*}
independent of the choice of coordinates for $P$.
Thus the archimedean valuations $v_z$ and $v_\infty$ do not contribute to the radical.
We define the {\em incomplete radical\/} of a point $P=(a:b:c)\in\bbP^2(\M)$ by
\begin{gather*}
r_\na(P,\rho)=\sum_vr_{v}(P,\rho).
\end{gather*}
Except if one of the coordinates of $P$ vanishes identically,
this sum is finite for every $\rho>0$ since the set of zeros and poles of $a$,
$b$ and $c$ form a discrete set.

\section{The Formal ABC Theorem for Meromorphic Functions}
\label{S: formal abc}

\begin{theorem}[\cite{thesis}]\label{L: formal abc}
Let\/ $P=(a:b:c)$ be a nonconstant point in\/ $\bbP^2(\M)$ such that\/ $a+b+c=0$.
Then,
 for every\/ $\rho\geq1,$
\begin{gather*}
h(P,\rho)\leq r_\na(P,\rho)+\int_{|z|=\rho}h_z\bigl({\textstyle
(\frac bc)^\logder,(\frac ca)^\logder,(\frac ab)^\logder}\bigr)(\rho)\frac{dz}{2\pi iz}
-h_\infty\bigl(\textstyle{(\frac bc)^\logder,(\frac ca)^\logder,(\frac ab)^\logder}\bigr).
\end{gather*}
\end{theorem}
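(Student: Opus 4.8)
The strategy is to exploit the Height-to-Radical Identity to rewrite the height of $P$ using the logarithmic-derivative coordinates, and then compare the two different coordinate representations valuation by valuation. By the Height-to-Radical Identity, $P=(a:b:c)=\bigl((b/c)^\logder:(c/a)^\logder:(a/b)^\logder\bigr)$, and since the height is independent of the choice of coordinates, I may compute $h(P,\rho)$ using either tuple. I will compute the archimedean part (the circle integral and the $v_\infty$ term) using the logarithmic-derivative coordinates — those are exactly the two archimedean terms appearing on the right-hand side of the theorem — and I will bound the nonarchimedean part, $\sum_{|x|<\rho}h_x$, by $r_\na(P,\rho)$.

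\textbf{Key steps.} First I would write $h(P,\rho)$ via \eqref{height} applied to the tuple $A=\bigl((b/c)^\logder,(c/a)^\logder,(a/b)^\logder\bigr)$. The archimedean contributions are then literally $\int_{|z|=\rho}h_z(A)(\rho)\,\frac{dz}{2\pi iz}$ and $-h_\infty(A)$, matching the claimed right-hand side, so it remains to show
\[
\sum_{|x|<\rho}h_x(A)(\rho)\leq r_\na(P,\rho).
\]
Second, I would analyze $h_x(A)(\rho)=\max\{v_x(A_1,\rho),v_x(A_2,\rho),v_x(A_3,\rho)\}$ at a fixed nonarchimedean place $x$ with $|x|<\rho$. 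The crucial observation is that each coordinate of $A$ is a logarithmic derivative of a quotient of $a,b,c$, so it has \emph{at most a simple pole} at $x$, i.e. $\ord(A_i,x)\geq-1$ (this uses the remark right after the definition of $\M$ that logarithmic derivatives have only simple poles); hence $v_x(A_i,\rho)=-\ord(A_i,x)\log\frac{\rho}{|x|}\leq\deg(v_x,\rho)$ for $0<|x|<\rho$, and similarly $v_x(A_i,\rho)\leq\deg(v_0,\rho)=\log\rho\geq0$ for $x=0$ when $\rho\geq1$. Therefore $h_x(A)(\rho)\leq\deg(v_x,\rho)$ at every such place. Third, I would argue that $h_x(A)(\rho)=0$ whenever $x$ is \emph{not} a ramification place of $P$, i.e. whenever $v_x(a)=v_x(b)=v_x(c)$: at such a place the three functions $a,b,c$ have a common order, so each quotient $b/c$, $c/a$, $a/b$ is a unit at $x$ (order $0$), whence its logarithmic derivative is holomorphic and nonvanishing-order-wise at $x$, giving $\ord(A_i,x)\geq0$ and thus $v_x(A_i,\rho)\leq0$; combined with the fact that the $h_x$ of a projective point is always $\geq0$ (one can rescale to make a coordinate a unit), we get $h_x(A)(\rho)=0$. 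Summing over $x$ then yields $\sum_{|x|<\rho}h_x(A)(\rho)\leq\sum_{x\in\Ram(P),\,|x|<\rho}\deg(v_x,\rho)=r_\na(P,\rho)$, which is the desired inequality.

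\textbf{Main obstacle.} The formal skeleton is straightforward; the delicate point is the bookkeeping at $x=0$ when $\rho\geq1$ and the precise handling of the $v_\infty$ normalization, since $v_0(f,\rho)$ is only a genuine valuation for $\rho\geq1$ — this is exactly why the hypothesis $\rho\geq1$ is needed, and I must make sure the bound $v_0(A_i,\rho)\leq\deg(v_0,\rho)$ is valid there even though $A_i$ might have a simple pole at $0$. A second point requiring care is the claim that $h_x$ of any projective point over $\M$ is nonnegative for $|x|<\rho$ (including $x=0$ with $\rho\geq1$); this follows by choosing coordinates in which $\min_i\ord(A_i,x)=0$ so that $\max_i v_x(A_i,\rho)=-\min_i\ord(A_i,x)\cdot\deg(v_x,\rho)=0$, using coordinate-independence of $h_x$ — but I would spell this out to be safe. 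Once these normalization issues are dispatched, the theorem drops out by summation and the Poisson--Jensen invariance of $h$.
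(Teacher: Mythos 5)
Your argument is correct and follows essentially the same route as the paper's: compute $h(P,\rho)$ in the logarithmic-derivative coordinates, observe that each coordinate has only simple poles, and compare $h_x$ to $r_x$ place by place, splitting into the ramified case ($v_x(a),v_x(b),v_x(c)$ not all equal, where $h_x(A,\rho)\leq\deg(v_x,\rho)=r_x(P,\rho)$) and the unramified case (where $h_x(A,\rho)\leq 0=r_x(P,\rho)$). One small misstep: you assert that $h_x$ of a projective point is always $\geq 0$ ``using coordinate-independence of $h_x$,'' but the local height $h_x$ is \emph{not} coordinate-independent --- rescaling $(a,b,c)$ by $\lambda$ shifts $h_x$ by $v_x(\lambda,\rho)$; only the total height $h(P,\rho)$ is invariant, by Poisson--Jensen. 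So you cannot conclude $h_x(A,\rho)=0$ at unramified places this way; indeed it can be strictly negative (e.g.\ if the $A_i$ share a common zero at $x$). Fortunately this claim is superfluous to your argument: the inequality $h_x(A,\rho)\leq 0=r_x(P,\rho)$ at unramified places, which you already have, is exactly what is needed, and it is precisely what the paper's proof records. With that correction dropped, your proof matches the paper's in both structure and substance.
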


\begin{proof}
Since $P=\left((b/c)^\logder:(c/a)^\logder:(a/b)^\logder\right)$ by the height-to-radical identity,
we use these coordinates to compute the height of $P$.
Let $x$ be a point with $|x|<\rho$ such that $\ord(a,x)>\ord(b,x)=\ord(c,x)$.
Then $c/a$ has a pole and $a/b$ has a zero at $x$,
hence $(c/a)^\logder$ and $(a/b)^\logder$ have a simple pole at $x$.
Moreover,
$(b/c)^\logder$ has no pole at $x$.
Therefore,
 $v_x$ contributes equally to the height and the radical,
and the same holds for all points where either $b$ or $c$ has a larger order of vanishing.
We obtain
$$
h_x(P,\rho)=r_x(P,\rho)\text{ whenever }v_x(a),v_x(b)\text{ and }v_x(c)\text{ are not all equal.}
$$

For the points $x$ with $|x|<\rho$ where $v_x(a)=v_x(b)=v_x(c)$,
none of the coordinates of $P$ has a pole,
hence
$$
h_x(P,\rho)\leq0=r_x(P,\rho).
$$
(For $x=0$,
we need that $\rho\geq1$.)
Adding to these contributions the archimedean contributions to the height yields the theorem.
\end{proof}

\begin{definition}
We define the {\em archimedean contribution\/} to the radical of a nonconstant point $P=(a:b:c)$
on the line $a+b+c=0$ in $\bbP^2(\M)$ by
$$
r_\arch(P,\rho)=\int_{|z|=\rho}h_z\bigl((b/c)^\logder,(c/a)^\logder,(a/b)^\logder\bigr)(\rho)\frac{dz}{2\pi iz}
-h_\infty\bigl((b/c)^\logder,(c/a)^\logder,(a/b)^\logder\bigr).
$$
The {\em completed radical\/} is defined by $r(P,\rho)=r_\na(P,\rho)+r_\arch(P,\rho).$
\end{definition}

Note that $r_\arch(P,\rho)$ does not depend on the choice of coordinates for $P$.
With these definitions,
Theorem \ref{L: formal abc} reads

\begin{theorem}[Formal ABC]\label{T: formal abc}
For every nonconstant point\/ $P=(a:b:c)$ on the line\/ $a+b+c=0$ in\/ $\bbP^2(\M),$
\begin{gather*}
h(P,\rho)\leq r(P,\rho),
\end{gather*}
for every\/ $\rho\geq1$.
\end{theorem}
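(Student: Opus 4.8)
The plan is to reduce Theorem \ref{T: formal abc} directly to Theorem \ref{L: formal abc}, since the two statements are by design merely notational variants of one another. The only content is the bookkeeping of the archimedean terms, which the preceding Definition has packaged into the single symbol $r_\arch(P,\rho)$.

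First I would unwind the definition of the completed radical: by Definition, $r(P,\rho)=r_\na(P,\rho)+r_\arch(P,\rho)$, and $r_\arch(P,\rho)$ is precisely the quantity
$\int_{|z|=\rho}h_z\bigl((b/c)^\logder,(c/a)^\logder,(a/b)^\logder\bigr)(\rho)\frac{dz}{2\pi iz}-h_\infty\bigl((b/c)^\logder,(c/a)^\logder,(a/b)^\logder\bigr)$
appearing on the right-hand side of Theorem \ref{L: formal abc}. Hence the right-hand side of the inequality in Theorem \ref{L: formal abc} is literally equal to $r_\na(P,\rho)+r_\arch(P,\rho)=r(P,\rho)$. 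Substituting this into the conclusion of Theorem \ref{L: formal abc} yields $h(P,\rho)\leq r(P,\rho)$ for every $\rho\geq1$, which is exactly the assertion of Theorem \ref{T: formal abc}.

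I would also remark, as a sanity check on the hypotheses, that the height-to-radical identity \eqref{hr} requires $P$ to be nonconstant and to lie on the line $a+b+c=0$, and that Theorem \ref{L: formal abc} carries exactly these hypotheses; the theorem being proved imposes the same conditions, so no new case analysis is needed. There is genuinely no obstacle here: this step is purely formal, and the only thing to be careful about is that $r_\arch$ is well defined independently of the choice of coordinates (noted already after the Definition), so that the inequality $h(P,\rho)\leq r(P,\rho)$ is a statement about the point $P$ and not about a representative triple $(a,b,c)$. The substantive work — bounding $r_\arch(P,\rho)$ via the lemma on the logarithmic derivative — is deferred to the next section and plays no role in this particular proof.
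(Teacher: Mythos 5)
Your proposal is correct and matches the paper exactly: the paper states that, with the definition of $r_\arch$ in hand, Theorem \ref{L: formal abc} \emph{reads} as Theorem \ref{T: formal abc}, which is precisely the notational unwinding you perform. No further argument is needed.
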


\begin{remark}\label{Hurwitz}
(See \cite{abcRM,abcvhi}.)
By Hurwitz' formula for a function $f\colon C\rightarrow\bbP^1$ from an
algebraic curve $C$ of genus $g$ to the projective line,
\begin{align*}
\deg f&=\#f^{-1}\{0,1,\infty\}+2g-2-\sum_{x\colon f(x)\neq0,1,\infty}(\ord(f,x)-1)\\
&\leq\#f^{-1}\{0,1,\infty\}+2g-2.
\end{align*}
We interpret Theorem \ref{T: formal abc} as a weak form of Hurwitz' formula,
where $\deg f$ is the height of $(f:1-f:-1)$,
and $\#f^{-1}\{0,1,\infty\}$ is the radical of $(f:1-f:-1)$.
\end{remark}

\section{The ABC Theorem for Meromorphic Functions}
\label{S: abc}

\begin{lemma}\label{L: bound}
For a nonconstant point\/ $P=(a:b:c)$ on the line\/ $a+b+c=0$ in $\bbP^2(\M),$
there exists an `exceptional set'\/ $E\subset(0,\infty)$ of finite total length such that
$$
r_\arch(P,\rho)\leq2\log h(P,\rho)+O(\log\log h(P,\rho))\quad\text{for all }\rho>0,\ \rho\not\in E.
$$
\end{lemma}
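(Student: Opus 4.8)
The plan is to estimate $r_\arch(P,\rho)$ directly from its definition as an integral of $h_z$ of the logarithmic-derivative coordinates, minus an $h_\infty$ term. Writing $f=a/c$ and $g=b/c$ (so $f+g+1=0$), the coordinates appearing in $r_\arch$ are, up to the scaling freedom, $(g)^\logder$, $(f)^\logder$ and their difference. Since $h_z$ of a triple is comparable (within an additive constant independent of $\rho$) to the sum of the positive parts $\log^+|\cdot|$ of the entries, it suffices to bound $\int_{|z|=\rho}\log^+\bigl|(f)^\logder(z)\bigr|\,\frac{dz}{2\pi iz}$ and the analogous integral for $g$. These are exactly the proximity-function integrals $m\bigl((f)^\logder,\rho\bigr)$ and $m\bigl((g)^\logder,\rho\bigr)$ in Nevanlinna's notation, and the $-h_\infty$ term only helps (it subtracts a quantity that, up to a constant, is $\log^+$ of a leading Laurent coefficient).

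The key input is the \emph{lemma on the logarithmic derivative}, in its quantitative form with an exceptional set: for a meromorphic function $h$ of finite or infinite order, for all $\rho$ outside an open set of finite total length,
\begin{gather*}
m\bigl((h)^\logder,\rho\bigr)\leq \log^+ T(h,\rho)+O\bigl(\log\rho\bigr)+O(1),
\end{gather*}
and more precisely one has the refined bound
$$
m\bigl((h)^\logder,\rho\bigr)\leq \log T(h,\rho)+O\bigl(\log\log T(h,\rho)\bigr)
$$
off an exceptional set, where $T(h,\rho)=m(h,\rho)+N(h,\rho)$ is the Nevanlinna characteristic. First I would apply this to $h=f$ and to $h=g$, take the union of the two exceptional sets (still of finite total length), and add the two bounds; the two $\log\log$ error terms combine into a single $O(\log\log(\cdots))$, and the factor $2$ in the statement is precisely the count of the two independent logarithmic derivatives (the third coordinate being their difference, hence bounded by their maximum).

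It then remains to relate $T(f,\rho)$ and $T(g,\rho)$ to $h(P,\rho)$. Because $f=a/c$ and $g=b/c$, with $a+b+c=0$, one has $f$ and $g$ expressed as ratios of the coordinates of $P$, so $T(f,\rho)$ and $T(g,\rho)$ are each bounded (within an additive $O(1)$ coming from the $h_\infty$ normalization and from the finitely many poles/zeros at the origin when $\rho$ is small) by $h(P,\rho)$. Indeed $h(P,\rho)$ is, up to such a constant, the height in the sense that dominates the characteristic of every ratio of two coordinates. Substituting $T(f,\rho),T(g,\rho)=O\bigl(h(P,\rho)\bigr)$ into the summed bound gives $r_\arch(P,\rho)\leq 2\log h(P,\rho)+O(\log\log h(P,\rho))$ off an exceptional set of finite total length, as claimed. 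Along the way I would note that the assumption that $P$ is nonconstant guarantees $h(P,\rho)\to\infty$, so the $\log h$ and $\log\log h$ terms make sense for large $\rho$, and for the bounded range of small $\rho$ the exceptional set can absorb any difficulty.

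The main obstacle I expect is bookkeeping rather than conceptual: first, pinning down the precise refined form of the logarithmic derivative lemma (the $\log T+O(\log\log T)$ version with a finite-length exceptional set, valid without a finite-order hypothesis) and citing or proving it cleanly; and second, carefully tracking the additive constants and the comparison $h_z(\text{triple})\asymp\sum\log^+|\cdot|$, together with the effect of the $-h_\infty$ terms and of the origin when $\rho<1$, so that nothing worse than $O(\log\log h)$ leaks in. Neither is deep, but both require care to get the constant $2$ exactly and the error term in the stated shape.
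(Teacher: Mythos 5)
Your proposal is correct and follows essentially the same route as the paper: choose the logarithmic-derivative coordinates from the height-to-radical identity, use the linear relation (your $f+g+1=0$, the paper's $c=-a-b$) to bound the archimedean integral by two proximity functions of logarithmic derivatives, invoke the Lang--Cherry lemma on the logarithmic derivative for each, and dominate the two resulting characteristics by $h(P,\rho)$. The paper's only tightening is the explicit inequality $|a|^2+|b|^2+|c|^2\leq2(1+|a|^2)(1+|b|^2)$ for $c=-a-b$, which yields $\int h_z\leq\log\sqrt2+m(a,\rho)+m(b,\rho)$ directly and so avoids the careful $\log^+$ bookkeeping you flag as the main source of friction.
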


\begin{proof}
We first recall the lemma on the logarithmic derivative \cite[Theorem 6.1, p.\ 48]{LC}.
Let the proximity function of a meromorphic function $f$ be defined by
$$
m(f,\rho)=\int_{|z|=\rho}\log\sqrt{1+|f(z)|^2}\,\frac{dz}{2\pi iz},
$$
and the height by $h(f,\rho)=h((f:1:0),\rho)$,
as defined in (\ref{height}).
Then,
for a nonconstant meromorphic function $f$,
there exists a subset $E\subset(0,\infty)$ of finite total length such that
$$
m((f)^\logder,\rho)\leq\log h(f,\rho)+O(\log\log h(f,\rho))\quad\text{for all }\rho>0,\ \rho\not\in E.
$$

Choose now $a$,
$b$ and $c$ such that $(a,b,c)=\left((b/c)^\logder,(c/a)^\logder,(a/b)^\logder\right)$.
Since $c=-a-b$,
we have
$|a|^2+|b|^2+|c|^2\leq2(1+|a|^2)(1+|b|^2)$.
Hence
$$
\int_{|z|=\rho}h_z(a,b,c)(\rho)\frac{dz}{2\pi iz}\leq\log\sqrt2+m(a,\rho)+m(b,\rho).
$$
Since $a=(b/c)^\logder$ and $b=(c/a)^\logder$,
we find a subset $E$ of $(0,\infty)$ of finite total length such that
\begin{multline*}
\int_{|z|=\rho}h_z(a,b,c)(\rho)\frac{dz}{2\pi iz}\leq\log h(b/c,\rho)+\log h(c/a,\rho)\\
+O(\log\log h(b/c,\rho)+\log\log h(c/a,\rho))
\end{multline*}
for all $\rho>0$,
$\rho\not\in E$.
Since $h(b/c,\rho)$ and $h(c/a,\rho)\leq h(P,\rho)$,
 the lemma follows.
\end{proof}

\begin{remark}
Using an ultrafilter on $(0,\infty)$,
one can suppress the dependence on $\rho$,
see \cite{Philippon}.
In this formalism,
there is no exceptional set $E$.
\end{remark}

Applying the bound for $r_\arch(P,\rho)$ of Lemma \ref{L: bound},
we obtain from Theorem \ref{T: formal abc},

\begin{theorem}[Function Theoretic ABC]
Let\/ $P=(a:b:c)$ be a nonconstant point on the line\/ $a+b+c=0$ in\/ $\bbP^2(\M)$.
Then there exists an open set\/ $E\subset(1,\infty)$ of finite total length such that
$$
h(P,\rho)\leq r_\na(P,\rho)+2\log h(P,\rho)+O(\log\log h(P,\rho)),
$$
for every $\rho\geq1,$
$\rho\not\in E$.
\end{theorem}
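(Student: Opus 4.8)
My plan is to read off the statement directly from the two results established above. The Formal ABC Theorem (Theorem~\ref{T: formal abc}) gives, with no exceptional set,
\[
h(P,\rho)\leq r(P,\rho)=r_\na(P,\rho)+r_\arch(P,\rho)\qquad(\rho\geq1),
\]
and Lemma~\ref{L: bound} produces a set $E_0\subset(0,\infty)$ of finite total length with
\[
r_\arch(P,\rho)\leq2\log h(P,\rho)+O(\log\log h(P,\rho))\qquad(\rho\notin E_0).
\]
First I would substitute the second inequality into the first; this already yields
\[
h(P,\rho)\leq r_\na(P,\rho)+2\log h(P,\rho)+O(\log\log h(P,\rho))
\]
for all $\rho\geq1$ with $\rho\notin E_0$.

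Next I would tidy up the exceptional set. The statement asks for an \emph{open} set $E\subset(1,\infty)$, so I would replace $E_0\cap(1,\infty)$ by an open subset of $(1,\infty)$ of finite total length containing it (any measurable set of finite measure sits inside such an open set), and take this to be $E$. I would also record that the asymptotic error term is meaningful: since $P$ is nonconstant, at least one of $a/b$, $b/c$, $c/a$ is a nonconstant meromorphic function, and since the height of such a coordinate is $\leq h(P,\rho)$, we get $h(P,\rho)\to\infty$ as $\rho\to\infty$ (the Nevanlinna height of a nonconstant function is unbounded). Hence $\log\log h(P,\rho)$ is defined for $\rho$ beyond some $R_0$; on the bounded range $[1,R_0]$ the claimed inequality is either vacuous or is absorbed into the implied constant.

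There is no genuine obstacle: the analytic content is entirely inside Lemma~\ref{L: bound} (the lemma on the logarithmic derivative), and the combinatorial/valuation-theoretic content is entirely inside Theorem~\ref{T: formal abc} (the height-to-radical identity). The only care needed is bookkeeping of the exceptional set — passing from the finite-length set of the logarithmic-derivative lemma to an open set inside $(1,\infty)$ — and checking that $h(P,\rho)$ eventually exceeds $e$ so that the $\log\log$ term makes sense.
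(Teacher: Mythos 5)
Your proof is correct and follows exactly the paper's route: Theorem~\ref{T: formal abc} gives $h\leq r_\na+r_\arch$, and substituting the bound on $r_\arch$ from Lemma~\ref{L: bound} yields the result. The extra remarks on making $E$ open and on $h(P,\rho)\to\infty$ are reasonable bookkeeping that the paper leaves implicit.
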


\section{Conclusion}
\label{S: conclusion}

From Lemma \ref{L: bound},
we see that the term $2\log h(P,\rho)+O(\log\log h(P,\rho))$ is a bound for the archimedean contribution to the radical.
This suggests that the function $\psi$ in Conjecture \ref{C: abc} should be viewed as
a bound for the archimedean contribution to the radical,
and not as this contribution itself.
Therefore,
we propose for a number field $k$,
\begin{itemize}
\item[1.]
to define a completed radical $r(P)=r_\na(P)+r_\arch(P)$;
\item[2.]
to show that $h(P)\leq r(P)$ for every point on the line $a+b+c=0$ in $\bbP^1(k)$;
\item[3.]
to obtain a bound of the type $r_\arch(P)\leq\psi(h(P))$.
\end{itemize}
Presumably,
step 2 will follow from formal properties of the definition in step 1.
Step 3 will be the hard step.
The strongest possible result will have $\psi(h)=O(\sqrt h/\log h)$,
but even a result where $\psi(h)=(1-1/C)h$ for some $C>0$ would be very interesting,
since it would imply $h\leq Cr$.
This would settle,
for example,
Fermat's Last Theorem for all exponents greater than $3C$.

\bibliographystyle{amsart}

\begin{thebibliography}{M}

\bibitem{Elkies}	N.\ D.\ Elkies,
\textit{ABC implies Mordell},
Intern. Math. Research Notices No.~\textbf{7} (1991), 99--109.

\bibitem{F1}	G.\ Faltings,
\textit{Endlichkeitss\"atze f\"ur abelsche Variet\"aten \"uber Zahlk\"orpern},
Inv. math. \textbf{73} (1983), 349--366.

\bibitem{F2}	G.\ Faltings,
\textit{Diophantine approximation on abelian varieties},
Annals of Mathematics \textbf{133} (1991), 549--576.

\bibitem{LC}	S.\ Lang and W.\ Cherry,
\textit{Topics in Nevanlinna theory,}
Lecture Notes in Mathematics \textbf{1433}, Springer-Verlag, 1980.

\bibitem{Mason}	R.\ C.\ Mason,
\textit{Diophantine Equations over Functions Fields},
London Math.\ Soc.\ LNS \textbf{96}, Cambridge, 1984.

\bibitem{Oesterle}	J.\ Oesterl\'e,
\textit{Nouvelles approches du "Th\'eor\`eme" de Fermat,\/}
S\'em.\ Bourbaki 1987--1988 no.\ \textbf{694}, Ast\'erisque 161--162 (1988), 165--186.

\bibitem{Philippon}	P.\ Philippon,
\texttt{http://www.math.jussieu.fr/\~{}pph/.}

\bibitem{Serre}	J.-P.\ Serre,
\textit{Local Fields},
Graduate Texts in Mathematics \textbf{67}, Springer-Verlag, 1979.

\bibitem{Smirnov}	A.\ L.\ Smirnov,
\textit{Hurwitz inequalities for number fields},
St.\ Peterburg Math.\ J.\ vol.\ 4 (1993), 357--375.

\bibitem{ST}	C.\ L.\ Stewart, R.\ Tijdeman,
\textit{On the Oesterl\'e-Masser conjecture},
Mh.\ Math.\ \textbf{102} (1986), 251--257.

\bibitem{SYu}	C.\ L.\ Stewart, K.\ Yu,
\textit{On the $abc$ conjecture},
Math.\ Ann.\ \textbf{291} (1991), 225--230.

\bibitem{thesis}	M.\ van Frankenhuijsen,
\textit{Hyperbolic Spaces and the ABC Conjecture,\/}
thesis, Katholieke Universiteit Nijmegen, 1995.

\bibitem{abcRM}	M.\ van Frankenhuijsen,
\textit{The ABC conjecture implies Roth's theorem and Mordell's conjecture},
Matem\'atica Contempor\^anea \textbf{16} (1999), 45--72.

\bibitem{lowerbound}	M.\ van Frankenhuijsen,
\textit{A lower bound in the ABC conjecture},
J.\ of Number Theory \textbf{82} (2000), 91--95.

\bibitem{abcvhi}	M.\ van Frankenhuijsen,
\textit{The ABC conjecture implies Vojta's height inequality for curves},
J.\ of Number Theory \textbf{95} (2002), 289--302.

\bibitem{abcrvhi}	M.\ van Frankenhuijsen,
\textit{ABC implies the radicalized Vojta height inequality for curves},
to appear in J.\ of Number Theory, 2007.

\bibitem{VojtaMordell}	P.\ Vojta,
\textit{Siegel's theorem in the compact case},
Annals of Mathematics \textbf{133} (1991), 509--548.

\bibitem{VojtaABC}	P.\ Vojta,
\textit{A more general abc conjecture},
International Mathematics Research Notices \textbf{21} (1998), 1103--1116.

\end{thebibliography}

\end{document}